\newtheorem{theo}{Theorem}
\newtheorem{cor}[theo]{Corollary}
\newtheorem{pro}[theo]{Proposition}				
\theoremstyle{definition}
\newtheorem{defi}[theo]{Definition}
\theoremstyle{remark}
\newtheorem{example}[theo]{Example}		
\newtheorem{rem}[theo]{Remark}
\newcommand{\D}{\mathbb D}
\newcommand{\K}{\mathbb K}
\newcommand{\R}{\mathbb R}
\newcommand{\N}{\mathcal N}
\newcommand{\C}{\mathbb C}
\newcommand{\Q}{\mathbb H}
\newcommand{\DQ}{\mathbb{DH}}
\newcommand{\CQ}{\mathbb{CH}}
\newcommand{\Pj}{\mathbb{P}}
\newcommand{\EG}{\operatorname{E}}
\newcommand{\qi}{\mathbf{i}}
\newcommand{\qj}{\mathbf{j}}
\newcommand{\qk}{\mathbf{k}}
\newcommand{\Cj}[1]{{#1}^\ast}
\newcommand{\norm}[1]{\left\|#1\right\|}
\newcommand{\SO}[1][3]{\operatorname{SO}(#1)}
\newcommand{\SE}[1][3]{\operatorname{SE}(#1)}
\newcommand{\St}{\mathcal{S}}
\newcommand{\I}{\operatorname{i}}
\newcommand{\EQ}{\mathcal{E}}
\newcommand{\eps}{\varepsilon}
\newcommand{\RLR}{\mathcal{L}}
\DeclareMathOperator{\mrpf}{mrpf}
\begin{document}
\title{Rational Motions with Generic Trajectories of Low Degree}
\author{
  Johannes Siegele\quad
  Daniel F. Scharler\quad
  Hans-Peter Schröcker\\
  University of Innsbruck, Unit Geometry and Surveying}

\maketitle

\begin{abstract}
    The trajectories of a rational motion given by a polynomial of degree $n$ in
  the dual quaternion model of rigid body displacements are generically of
  degree $2n$. In this article we study those exceptional motions whose
  trajectory degree is lower. An algebraic criterion for this drop of degree is
  existence of certain right factors, a geometric criterion involves one of two
  families of rulings on an invariant quadric. Our characterizations allow the
  systematic construction of rational motions with exceptional degree reduction
  and explain why the trajectory degrees of a rational motion and its inverse
  motion can be different.
 \end{abstract}

\par\noindent \textit{MSC 2010:} 70B10, 16S36, 14E05, 14H45, 51N15, 51N25
 \par\noindent \textit{Keywords:} rational motion,
inverse motion,
rational curve,
dual quaternions,
degree reduction,
polynomial factorization,
Darboux motion,
Wunderlich motion

\section{Introduction}
\label{sec_introduction}

Rational motions are of particular interest in modern kinematics and robotics.
The rationality of the trajectories yields multiple algorithmic and numerical
benefits, see e.\,g., \cite{juettler93,roeschel98}. Rational motions are often
represented by homogeneous transformation matrices of dimension four by four
whose entries are rational functions. Our study is based on the dual quaternion
model of $\SE$ where rational motions appear as rational curves on the Study
quadric $\St$ and are parameterized by certain polynomials with dual quaternion
coefficients.

The \emph{degree} of a rational motion is the maximal degree of a trajectory
which, at the same time, is the degree of a generic trajectory. It coincides
with the degree of the motion as rational curve in the matrix model but not with
the degree as rational curve in the dual quaternion model. In fact, if the
rational curve in the dual quaternion model is given by a polynomial of degree
$n$, the motion degree is generically $2n$. However, exceptions to this relation
of degrees do exist. The most famous example is probably the Darboux motion, see
e.\,g., \cite[Chapter~9, \S~3]{bottema90} or \cite{li15}. It is represented by a
polynomial $C$ of degree three in dual quaternions while its generic
trajectories are of degree two.

Whenever the generic trajectory degree is less than $2n$, we speak of a
\emph{degree reduction.} It is well-known that a degree reduction is related to
the existence of intersection points of the rational motion with a certain
subspace of the Study quadric $\St$, the \emph{exceptional generator} $\EG$ (the
projective space over the vector space of non-invertible dual quaternions), see
e.\,g., \cite[Chapter~11]{bottema90} for planar motions or \cite{hamann11} for
certain line symmetric motions. The proofs of these papers easily generalize to
arbitrary rational motions, c.\,f. our Proposition~\ref{pro:degree}.

However, not all observed phenomena in this context can be explained by the
number of intersection points. In particular, there are rational motions of
degree $n$ that intersect $\EG$ in $m$ points but, nonetheless, have
trajectories of degree strictly less than $2n-m$. Again, an example is provided
by the Darboux motion where we have $n = 3$, $m = 2$ but trajectories of degree
$2 < 2n-m = 4$.

A second issue has been pointed out by Jon M. Selig in private communication. It
refers to the degrees of a rational motion and its inverse which is obtained by
interchanging the moving and the fixed frame. It is an important and natural
concept in mechanism science in situations where relative motions of links are
studied. If a rational motion is given by a dual quaternion polynomial $C$, its
inverse motion is given by the conjugate dual quaternion polynomial $\Cj{C}$
which apparently does not significantly differ from $C$ in its algebraic and
geometric characteristics. In particular, the curves parameterized by $C$ and
$\Cj{C}$ intersect the exceptional generator in the same number of points.
However, there are rational motions where the trajectory degrees of $C$ and
$\Cj{C}$ differ. An example is illustrated in Figure~\ref{fig:cardan-oldham}.
The elliptic or Cardan motion \cite[pp.~346--348]{bottema90} in the top row is
characterized by having two non-parallel straight line trajectories. Its generic
trajectories are ellipses, that is, rational curves of degree two. The inverse
motion (Figure~\ref{fig:cardan-oldham}, bottom) is called cardioid motion in
\cite[pp.~348--349]{bottema90} but is also known as Oldham motion. Two rigidly
connected lines move such that each line always passes through a fixed point.
Its generic trajectories, limaçons of Pascal, are of degree four. Cardan and
Oldham motion can be seen as special case of a Darboux motion and its inverse
motion. In fact, also for a Darboux motion the trajectories are of degree two
while the trajectories of the inverse motion are of degree four.

\begin{figure}
  \centering
  \includegraphics[page=1]{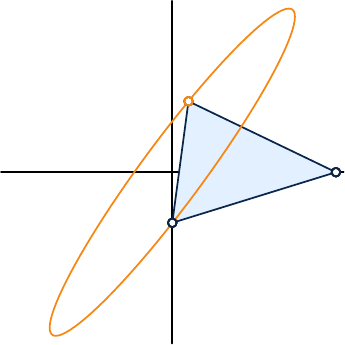}\hfill \includegraphics[page=3]{ellipticmotion}\hfill \includegraphics[page=5]{ellipticmotion}\hfill \includegraphics[page=7]{ellipticmotion}\\[2ex]
  \includegraphics[page=1]{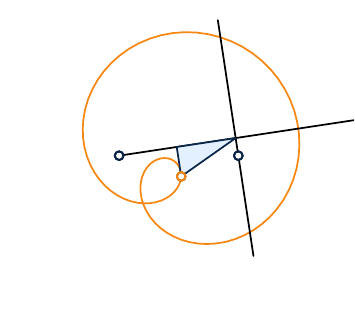}\hfill \includegraphics[page=3]{oldhammotion}\hfill \includegraphics[page=5]{oldhammotion}\hfill \includegraphics[page=7]{oldhammotion}
  \caption{Cardan motion (top) and Oldham motion (bottom) are inverse but of
    different degrees.}
  \label{fig:cardan-oldham}
\end{figure}

There is a vague understanding in the kinematics community that an
``exceptional'' degree reduction can occur if intersection points of $C$ and
$\EG$ lie on a certain quadric $\EQ$ of full rank and signature zero contained
in $\EG$. However, all these geometric concepts ($\EG$ and also $\EQ$) are
invariant with respect to conjugation and thus cannot explain the difference in
degree of the trajectories of $C$ and~$\Cj{C}$.

This article will provide a comprehensive answer to the questions above. A first
step in this direction was done in \cite{rad18} where the authors provided a
complete geometric characterization of all transformations of the
seven-dimensional projective space $\Pj^7$ over the vector space of dual
quaternions that are induced by coordinate changes in fixed and moving frame.
These projective transformations not only fix the Study quadric $\St$, the exceptional
generator $\EG$, and the quadric $\EQ$ in $\EG$ but also the two families of
(complex) rulings on $\EQ$. Since conjugation interchanges these families,
different trajectories of motion $C$ and inverse motion $\Cj{C}$ may be
explained by the position of intersection points of $C$ with $\EQ$ with respect
to one family of rulings. This turns out to be the case and is a new result for
rational motions in $\SE$. In case of the planar motion group $\SE[2]$, some
statements on motions with algebraic trajectories and their degrees are provided
in \cite[Chapter~11, \S~4]{bottema90}. They agree with the specialization of our
results to planar kinematics.

We organize this paper as follows. In Section~\ref{chap1} we provide some basic
information on quaternions and dual quaternions and their relation to
kinematics. We introduce complex quaternions which are needed to properly study
the quadric $\EQ$, and we explain fundamental geometric notions related to the
dual quaternion parametrization of $\SE$. Finally, we give precise definitions
for concepts related to rational parametrizations, rational curves, and motion
polynomials.

Our main results are proved in Section~\ref{chap2}. We investigate in more
detail the degree of trajectories and derive an algebraic criterion for
exceptionally low trajectory degrees in Section~\ref{sec2_1}. This algebraic
criterion is then used for a geometric characterization in Section~\ref{sec2_2}.
We also relate our findings to some results known from literature and talk about
the construction of motions with exceptionally low trajectory degree.

\section{Preliminaries}\label{chap1}

In this section we collect some properties of quaternions, complex quaternions
and dual quaternions and settle basic notation. We also introduce polynomials
with quaternion coefficients, relate them to rational motions and study some of
their fundamental properties.

\subsection{Quaternions, Complex Quaternions and Dual Quaternions}\label{sec1_1}

The algebra of \emph{quaternions $\Q$} is the real associative algebra generated
by the base $(1,\qi,\qj,\qk)$. The non-commutative multiplication of these units
is defined by
\begin{align*}
  \qi^2=\qj^2=\qk^2=\qi\qj\qk=-1.
\end{align*}
We sometimes refer to $\Q$ as algebra of Hamiltonian quaternions in order to
emphasize the distinction to the algebras of complex or dual quaternions.

The complex algebra generated by the same base and with the same rules
for multiplication is denoted by $\CQ$, elements of this algebra are called
\emph{complex quaternions.} A quaternion or complex quaternion $q$ is given by
\begin{equation}\label{quaternion}
  q=q_0+q_1\qi+q_2\qj+q_3\qk
\end{equation}
where $q_0$, $q_1$, $q_2$, $q_3$ are real or complex coefficients, respectively.
The imaginary unit of the complex numbers will be denoted by $\I$ and has to be
distinguished from the quaternion unit $\qi$. The (complex) quaternion conjugate
is defined as $\Cj{q}=q_0-q_1\qi-q_2\qj-q_3\qk$, the (complex) quaternion norm
is $\norm{q}=q\Cj{q}=q_0^2+q_1^2+q_2^2+q_3^2$. Note that this is not a norm in
the classical sense. Whenever the norm of a (complex) quaternion $q$ is
non-zero, the inverse of $q$ can be computed via $\Cj{q}/\norm{q}$. Otherwise
$q$ is a zero divisor. The only non-invertible Hamiltonian quaternion is zero
whence $\Q$ is a division ring.

The algebra of dual quaternions $\DQ$ is obtained by extending the real
coefficients in Equation~\eqref{quaternion} to dual numbers
$\mathbb{D}=\mathbb{R}[\eps]/\langle \eps^2 \rangle$. Any dual quaternion can be
written as $p+\eps d$ where $p\in\Q$ is called the \emph{primal} and $d\in\Q$
the \emph{dual} part. Multiplication follows the rules of quaternion
multiplication with the additional rules $\eps^2=0$, $\eps\qi = \qi\eps$,
$\eps\qj = \qj\eps$, and $\eps\qk = \qk\eps$. The conjugation of a dual
quaternion $p+\eps d$ is defined by $\Cj{(p+\eps d)}=\Cj{p}+\eps\Cj{d}$. The
norm $\norm{p+\eps d} \coloneqq(p+\eps d)\Cj{(p+\eps d)} = p\Cj{p} +
\eps(p\Cj{d}+d\Cj{p})$ is in general a dual number. It is real if and only if
the \emph{Study condition}
\begin{equation}
  \label{eq_study_condition}
  p\Cj{d} + d\Cj{p} = 0
\end{equation}
holds.

Quaternions, dual quaternions and complex quaternions also form a real or
complex vector space. By $\Pj(\Q)$, $\Pj(\DQ)$, and $\Pj(\CQ)$ we denote the
projective spaces over the respective vector space. Note that $\Pj(\Q)$ and
$\Pj(\DQ)$ are real projective spaces and $\Pj(\CQ)$ is the complex extension of
$\Pj(\Q)$. The elements of these projective spaces are classes of proportional
non-zero vectors. The projective point represented by $q \neq 0$ is denoted by
$[q]$. The symbol ``$\vee$'' indicates projective span. If the two points $[p]$,
$[q]\in\Pj(\CQ)$ are different, their connecting line is $[p] \vee [q]$. If they
are equal, we have $[p] = [q] = [p] \vee [q]$.

The Study condition \eqref{eq_study_condition} in the projective context is the
vanishing condition of the quadratic form $p + \eps d \mapsto p\Cj{d} +
d\Cj{p}$. The associated bilinear form hence defines a quadric $\St$ in
$\Pj(\DQ)$, the \emph{Study quadric,} which is of great importance in spacial
kinematics \cite[Chapter~11]{selig05}.

In contrast to the quaternions $\Q$, the complex quaternions $\CQ$ have
non-trivial zero-divisors. They are related to another important quadric. Zero
divisors are precisely the zeros of the quadratic form $q \mapsto q\Cj{q}$ whose
associated symmetric bilinear form defines a regular quadric $\N$ in $\Pj(\CQ)$
which we call the \emph{null quadric.} It is foliated by two families of
straight lines (rulings) which we call the \emph{left} and the \emph{right
  rulings.} More precisely, given $[p] \in \N$, the left ruling $L$ and the
right ruling $R$ through $[p]$ are the point sets
\begin{equation}
  \label{eq_rulings}
  L \coloneqq \{[q] \in \Pj(\CQ) \colon q\Cj{p}=0\},\quad
  R \coloneqq \{[q] \in \Pj(\CQ) \colon \Cj{p}q=0\},
\end{equation}
respectively. This requires some justification which we give in the next
proposition.

\begin{pro}
  \label{lem_rulings}
  The sets $L$ and $R$ in \eqref{eq_rulings} are two different straight lines
  through $[p]$ and contained in~$\N$.
\end{pro}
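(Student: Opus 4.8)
The proposition splits into four claims, and the plan is to pass from the point sets to the linear subspaces underneath them. Write $\hat L=\{q\in\CQ : q\Cj p=0\}$ and $\hat R=\{q\in\CQ : \Cj p q=0\}$ for the kernels of right and left multiplication by $\Cj p$, so that $L=\Pj(\hat L)$ and $R=\Pj(\hat R)$. It then suffices to show that $p\in\hat L\cap\hat R$, that every nonzero element of $\hat L$ and $\hat R$ is a zero divisor, that $\dim_\C\hat L=\dim_\C\hat R=2$, and that $\hat L\neq\hat R$.

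I would first dispatch the easy parts. Since $[p]\in\N$ we have $\norm p=p\Cj p=0$, and because the norm is a central scalar with $\Cj p p=p\Cj p$ we also get $\Cj p p=0$; hence $p\in\hat L\cap\hat R$. For the inclusion in $\N$, suppose some $q\in\hat L$ were invertible. Multiplying $q\Cj p=0$ on the left by $q^{-1}=\Cj q/\norm q$ would give $\Cj p=0$, whence $p=0$, a contradiction. So every nonzero $q\in\hat L$ is non-invertible, i.e. $\norm q=0$ and $[q]\in\N$; the same argument applies to $\hat R$. This yields $L,R\subseteq\N$ with no computation.

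The technical heart is the dimension count $\dim_\C\hat L=\dim_\C\hat R=2$, which is precisely what makes $L$ and $R$ lines rather than points or planes. For the upper bound I would use that, by the previous step, $\hat L$ and $\hat R$ are totally isotropic subspaces of the nondegenerate quadratic form $q\mapsto q\Cj q$ on $\C^4$, and such a form admits no isotropic subspace of dimension exceeding $\lfloor 4/2\rfloor=2$. For the lower bound, writing $\rho_a$ for right multiplication by $a$, the identity $\rho_{\Cj p}\circ\rho_p=\rho_{p\Cj p}=\rho_{\norm p}=0$ shows $\CQ p\subseteq\hat L$, and symmetrically $p\CQ\subseteq\hat R$. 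It remains to see $\dim_\C\CQ p\ge 2$: if the left ideal $\CQ p$ were one-dimensional then $\qi p=\alpha p$ and $\qj p=\beta p$ for scalars $\alpha,\beta$, and comparing $\qk p=\qi\qj p=\alpha\beta p$ with $\qk p=-\qj\qi p=-\alpha\beta p$ forces $\alpha\beta=0$; but then $\qi p=0$ or $\qj p=0$, and since $\qi,\qj$ are invertible this gives $p=0$, a contradiction. Hence $2\le\dim_\C\CQ p\le\dim_\C\hat L\le 2$, so $\hat L=\CQ p$ has dimension $2$, and likewise $\hat R=p\CQ$. Thus $L$ and $R$ are genuine lines through $[p]$ contained in $\N$.

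Finally, distinctness. By the previous step $\hat L=\CQ p$ is a left ideal and $\hat R=p\CQ$ is a right ideal of $\CQ$. If $L=R$, that is $\hat L=\hat R$, then this common subspace would be a two-sided ideal of dimension $2$, contradicting the simplicity of $\CQ$ (a central simple $\C$-algebra, indeed $\CQ\cong M_2(\C)$, so its only two-sided ideals are $0$ and $\CQ$). Hence $L\neq R$. The only genuinely delicate point in this plan is the dimension computation: once the kernels are pinned to dimension exactly two, the line property, the position of $[p]$, the inclusion in $\N$, and the distinctness all follow formally.
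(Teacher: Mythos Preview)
Your proof is correct and follows the same overall strategy as the paper: pass to the kernels $\hat L,\hat R$, show $\CQ p\subseteq\hat L$, and pin $\dim_\C\hat L$ to exactly two by combining a lower bound on $\dim_\C\CQ p$ with the isotropy upper bound coming from $\hat L\subseteq\N$. Two sub-arguments differ in execution. For the lower bound $\dim_\C\CQ p\ge 2$, the paper writes out $p,\qi p,\qj p,\qk p$ in coordinates, assumes they are all proportional, compares coefficients to obtain $\alpha_i^2=-1$ and then $\norm p=-2p_0^2=0$, forcing $p_0=0$ and hence $p=0$; your anticommutation trick via $\qk=\qi\qj=-\qj\qi$ reaches the same contradiction without coordinates. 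For distinctness $L\neq R$, the paper merely asserts that the defining linear equations are ``not equivalent''; your observation that $\hat L=\CQ p$ is a left ideal and $\hat R=p\CQ$ a right ideal, so their coincidence would produce a two-sided ideal of dimension two in the simple algebra $\CQ\cong M_2(\C)$, is both tighter and more conceptual.
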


\begin{proof}
  Let $p\in\CQ$ be a complex quaternion such that $[p]\in\N$, i.\,e., $p\neq 0$
  and $\norm{p}=0$, and consider the linear map
  \begin{equation*}
    \varphi\colon \CQ \to \CQ,\quad
    q \mapsto q\Cj{p}.
  \end{equation*}
  The projective space generated by the kernel of this map is precisely $L$. It
  obviously contains
  \begin{equation}
    \begin{aligned}\label{quat_base}
      p&=\phantom{-}p_0+p_1\qi+p_2\qj+p_3\qk,\\
      \qi p&=-p_1+p_0\qi-p_3\qj+p_2\qk,\\
      \qj p&=-p_2+p_3\qi+p_0\qj-p_1\qk,\\
      \qk p&=-p_3-p_2\qi+p_1\qj+p_0\qk.
    \end{aligned}
  \end{equation}
  The vector space $V$ spanned by these four complex quaternions is a subspace
  of $\ker(\varphi)$. We will show that it is of (complex) dimension two. Assume
  at first that $\dim V \le 1$. Then there exist $\alpha_1$, $\alpha_2$,
  $\alpha_3\in\C\backslash\{0\}$ such that $p=\alpha_1 \qi p=\alpha_2\qj
  p=\alpha_3\qk p$. Comparing coefficients yields the conditions
  \begin{equation}\label{eq1_lem1.2}
      p_0 = -\alpha_i p_i,\quad p_i =\alpha_i p_0\quad\text{for $i \in \{1,2,3\}$}
  \end{equation}
  whence $\alpha_i^2=-1$. Consequently we get $\norm{p}=-2p_0^2$ and, because
  the norm of $p$ is assumed to be zero, it holds $p_0=0$. This together with
  Equation~\eqref{eq1_lem1.2} implies that $p=0$ which is a contradiction. Thus,
  the kernel of $\varphi$ is at least of affine dimension two and $L$ is at
  least a projective line. Since the maximal subspaces on $\N$ are lines, it
  suffices to show that $L$ is contained in $\N$: Let $[q]\in L$ be arbitrary.
  We have $0=q\Cj{p}$. Left-multiplying both sides of this equation with
  $\Cj{q}$ yields $0=\norm{q}\Cj{p}$. Since the norm of $q$ is a complex number
  and $p$ is not zero by assumption, it follows $\norm{q}=0$, hence
  $[q]\in\mathcal{N}$.

  In the same way it can be shown that $R$ is a ruling on $\mathcal{N}$.
  Finally, the two lines are different since the linear equations defining them
  are not equivalent.
\end{proof}

To conclude this subsection, we state a simple, yet important, corollary to
Proposition~\ref{lem_rulings} whose proof is left to the reader:

\begin{cor}\label{cor2}
  Let $[p]\in\N$ and $q\in\CQ$.
  \begin{enumerate}
  \item[(i)] If $qp\neq 0$ and $[qp]\neq[p]$, then $[p]$ and $[qp]$ span a left
    ruling of $\mathcal{N}$.
  \item[(ii)] If $pq\neq 0$ and $[pq]\neq[p]$, then $[p]$ and $[pq]$ span a
    right ruling of $\mathcal{N}$.
  \end{enumerate}
\end{cor}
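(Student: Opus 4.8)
The plan is to reduce both claims to the defining equations of the rulings in \eqref{eq_rulings} together with the associativity of the complex quaternion product. Since $[p]\in\N$, we have $p\neq 0$ and $\norm{p}=p\Cj{p}=\Cj{p}p=0$, the norm being a complex scalar that lies in the centre of $\CQ$.

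For part~(i) I would verify that both $[p]$ and $[qp]$ lie on the left ruling $L$ through $[p]$, as defined in \eqref{eq_rulings}. The point $[p]$ itself satisfies $p\Cj{p}=\norm{p}=0$, so $[p]\in L$ (which Proposition~\ref{lem_rulings} also confirms by placing $[p]$ on its own left ruling). For $[qp]$ the key computation is the single line $(qp)\Cj{p}=q(p\Cj{p})=q\norm{p}=0$, using associativity and $\norm{p}=0$; hence $[qp]\in L$ as well. By Proposition~\ref{lem_rulings} the set $L$ is a straight line, and since $[qp]\neq[p]$ by hypothesis, these two distinct points determine $L$ uniquely, so $[p]\vee[qp]=L$ is a left ruling of $\N$.

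Part~(ii) is entirely analogous, with left and right interchanged: here I would check that $[p]$ and $[pq]$ both lie on the right ruling $R$ through $[p]$. One has $\Cj{p}p=\norm{p}=0$, so $[p]\in R$, and the computation $\Cj{p}(pq)=(\Cj{p}p)q=\norm{p}q=0$ shows $[pq]\in R$. Again $R$ is a line by Proposition~\ref{lem_rulings} and $[pq]\neq[p]$, so $[p]\vee[pq]=R$.

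The argument carries no real obstacle; the only point requiring a moment's care is to recognize that $\norm{p}$ is a central complex scalar, so that it may be factored out of the product after reassociating—this is precisely what makes $(qp)\Cj{p}$ and $\Cj{p}(pq)$ collapse to zero. Everything else is supplied directly by Proposition~\ref{lem_rulings}, which is why the corollary is stated without proof.
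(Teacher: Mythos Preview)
Your proof is correct and is precisely the straightforward verification the paper intends when it leaves the corollary to the reader: both points satisfy the defining linear equation \eqref{eq_rulings} of the respective ruling through $[p]$, and Proposition~\ref{lem_rulings} guarantees this set is a line. The remark about centrality of $\norm{p}$ is harmless but not actually needed, since after reassociating you obtain $q\cdot 0$ (respectively $0\cdot q$) directly.
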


\subsection{Quaternion Polynomials and Rational Motions}

For polynomials with coefficients in a ring, different notions of multiplication
are conceivable. In this article, we use polynomials with coefficients from
$\Q$, $\CQ$, or $\DQ$ to describe rational motions. The polynomial indeterminate
$t$ serves as a scalar motion parameter whence it is natural to assume that $t$
commutes with all coefficients. This defines a non-commutative multiplication of
polynomials and we denote the thus obtained polynomial rings by $\Q[t]$,
$\CQ[t]$, and $\DQ[t]$, respectively. The \emph{conjugate} $\Cj{P}$ of a
polynomial $P$ is defined as the polynomial obtained by conjugating its
coefficients. The \emph{norm polynomial} is defined as $\norm{P} \coloneqq
P\Cj{P}$. It is an element of $\R[t]$, $\C[t]$ or $\D[t]$, respectively.

Polynomials over rings come with the notions of left/right evaluation,
left/right zeros, and left/right factors. Given $P=\sum_{\ell=0}^{n}p_\ell
t^\ell$ in $\Q[t]$ (or $\CQ[t]$, $\DQ[t]$) we define the \emph{right
  evaluation} of $P$ at $q\in\Q$ (or $\CQ$, $\DQ$) to be
$\sum_{\ell=0}^np_\ell q^\ell$. The name ``right evaluation'' comes from the
fact that the indeterminate is written to the right of the coefficients before
being substituted for $q$. Quaternions where the right evaluation of a
polynomial $P$ vanishes are called \emph{right zeros} of $P$. The notions of
``left evaluation'' and ``left zero'' are similar but we will not need them in
the following.

A polynomial $S$ is called \emph{right factor} of $P$ if there exists a
polynomial $F$ such that $P = FS$ and it is called a \emph{left factor} if there
exists a polynomial $G$ such that $P = SG$.

There is a well-known double cover of $\SE$, the group of rigid body
displacements, by the group of dual quaternions of unit norm. The dual
quaternion $p + \eps d$ with $\norm{p + \eps d} = 1$ is sent to the map
\begin{equation}
  x = x_1\qi + x_2\qj + x_3\qk \in \R^3 \mapsto px\Cj{p} + 2p\Cj{d}
\end{equation}
which is also the image of $-(p + \eps d)$. In order to get rid of this
ambiguity, we prefer a projective formulation of this homomorphism. We embed the
vector space $\R^4$ into $\Q$ via $(x_0,x_1,x_2,x_3) \hookrightarrow x_0 +
x_1\qi + x_2\qj +x_3\qk$ and consider the real projective space $\Pj^3(\R)$ over
this vector space. Denote by $\R^\times$ the real multiplicative group. The
factor group
\begin{equation*}
  \DQ^\times \coloneqq \{ p + \eps d \in \DQ \colon \norm{p + \eps d} \in \R^\times \} / \R^\times
\end{equation*}
consists of points on the Study quadric $\St \subset \Pj(\DQ)$ minus the
\emph{exceptional generator $\EG \subset \Pj(\DQ)$} whose points are
characterized by having zero primal part $\EG = \{ [\eps d] \in \Pj(\DQ)\colon d
\in \Q \}$. This yields an isomorphism from $\DQ^\times$ to $\SE$ which sends
the point $[p + \eps d]$ to the map
\begin{equation}
  \label{eq_isomorphism}
  [x_0 + x_1\qi + x_2\qj + x_3\qk] \in \Pj^3 \mapsto [p(x_0 + x_1\qi + x_2\qj + x_3\qk)\Cj{p} + 2x_0p\Cj{d}].
\end{equation}
By projection on the primal part ($d = 0$) we obtain an isomorphism between
$\Pj(\Q) = (\Q \setminus \{0\}) / \R^\times$ and $\SO$. By extension of scalars
from $\R$ to $\C$ we define an action of points of $\Pj(\CQ)$ on points of
complex projective three space~$\Pj^3(\C)$. It is precisely the zero-divisors
that give singular maps of $\Pj^3(\C)$.

Rational motions are obtained by replacing $p$ and $d$ in \eqref{eq_isomorphism}
with (Hamiltonian or complex) quaternion polynomials $P$ and $D$, respectively.
Then the image of $[x_0 + x] \in \Pj^3$ is no longer a single point but
again a quaternion polynomial which we may regard as a parametric expression of
a rational curve. These concepts are central for this article so that we take
some care to capture all subtleties in their definitions.

Denote by $\K(t)$ the field of rational functions in the indeterminate $t$ over
the field $\K$ with $\K = \R$ or $\K = \C$ and by $\K^{m+1}(t)$ the vector space
of $(m+1)$-tuples over $\K(t)$. The projective space over this vector space is
denoted by $\Pj^m(\K(t))$.

\begin{defi}
  \label{def:rational-curve}
  A \emph{rational parametric expression $C$} is an element of $\K^{m+1}(t)$, a
  \emph{rational curve $[C]$} is an element of $\Pj^m(\K(t))$.
\end{defi}

The \emph{degree $\deg C$} of a rational parametric expression $C$ is defined as
the maximal degree of all coefficient functions. Given a rational curve $[C] \in
\Pj^m(\K(t))$ there exists a rational parametric expression $\hat C$ such that $[C] = [\hat
C]$ and the entries of $\hat C$ are polynomials. It is found by multiplying away
the denominators of all coefficients of $C$. We will generally consider
only polynomial parametric expressions. Among the many possible polynomial
parametric expressions of $[C]$ the ones of minimal degree are distinguished. They
are found by dividing by the greatest common divisor of all entries of $\hat C$
and are unique up to multiplication with a scalar. We call them \emph{reduced.}
The \emph{degree $\deg([C])$} of the rational curve $[C]$ is defined as degree
of any reduced representation.

The value of a rational curve at a scalar $t_0 \in \K$ is defined as the point
$[C(t_0)] \in \Pj^m(\K)$ for any representing parametric expression where $C(t_0)$
is well-defined an different from $0$. Note that for any $C$ there exist at most
finitely many values $t_0 \in \K$ that are not suitable for evaluation. Moreover,
$C_0(t_0)$ is always well-defined and different from zero for any reduced
parametric expression $C_0$. Finally, we have of course $[C(t_0)] =
[\tilde{C}(t_0)]$, whenever $[C] = [\tilde{C}]$ and these expressions are
well-defined.

The Zariski closure of the point set
\begin{equation*}
  \{ [C(t_0)] \colon t_0 \in \K \}
\end{equation*}
is an algebraic curve that, in general, contains one additional point. This
point is obtained as limit of $[C(t_0)]$ for $t_0 \to \infty$. More precisely,
we define
\begin{equation*}
  [C(\infty)] \coloneqq [\lim_{t_0 \to \infty} t_0^{-n} C_0(t_0)]
\end{equation*}
where $C_0$ is a reduced parametric expression and $n = \deg [C] = \deg C_0$. The
point $[C(\infty)]$ can also be thought of as being represented by the vector of
leading coefficients of~$C_0$.

Neither the degree nor the point set obtained by evaluating at scalars or
$\infty$ changes under fractional linear parameter transformation $t \mapsto
(\alpha t+\beta)/(\gamma t+\delta)$ with scalars $\alpha$, $\beta$, $\gamma$ and
$\delta$ that satisfy the regularity condition $\alpha\delta - \beta\gamma \neq
0$. These parameter transformations naturally include the value $t = \infty$
which is mapped to $\alpha/\gamma$ and is the pre-image of $-\delta/\gamma$
(with the conventions that $\alpha/\gamma = -\delta/\gamma = \infty$ if $\gamma
= 0$).

\begin{defi}
  \label{def:motion-polynomial}
  The (Hamiltonian, complex or dual) quaternion polynomial $P + \eps D$ is
  called a \emph{motion polynomial} if the polynomial Study condition $P\Cj{D} +
  D\Cj{P} = 0$ is satisfied and the norm polynomial $P\Cj{P}$ does not vanish.
\end{defi}

Any motion polynomial is a rational parametric expression in the sense of
Definition~\ref{def:rational-curve}. The corresponding rational curve is called
a \emph{rational motion.} For Hamiltonian and complex quaternions we have $D =
0$ and the Study condition in Definition~\ref{def:motion-polynomial} becomes
void. Moreover, $P\Cj{P} \neq 0$ for all $P \in \Q[t] \setminus \{0\}$ whence
all Hamiltonian quaternion polynomials (with exception of $0$) are motion
polynomials.

Evaluation of a rational motion over $\Q$ or $\DQ$ yields a rigid body
displacement for all parameter values $t \in \R \cup \{\infty\}$. For any point
$[x_0 + x_1\qi + x_2\qj + x_3\qk] = [x_0 + x] \in \Pj^3$, the polynomial
\begin{equation}
  \label{eq_trajectory}
  P(x_0 + x)\Cj{P} + 2x_0P\Cj{D}
\end{equation}
is a rational parametric expression. It represents a rational curve, the
\emph{trajectory} of $[x_0 + x]$ with respect to the rational motion $[P + \eps
D]$.

As a rational curve, any rational motion $[P+\eps D]$ is equipped with the
notion of a degree. But this degree is different from the usual meaning where
the degree of a rational motion is defined as the maximal (and also generic)
degree of any of its trajectories (c.\,f. \cite{juettler93,roeschel98}). We
adopt this latter convention. If we feel the need to distinguish between these
two different concepts of degrees we speak of \emph{quaternion degree}
(the degree of $[P+\eps D]$) and \emph{trajectory degree.}

The evaluation of a motion polynomial may yield a zero divisor for finitely many
parameter values $t_0 \in \K \cup \{ \infty \}$. It is convenient to assume that
this does not happen at $t_0 = \infty$, that is, the leading coefficient of $P +
\eps D$ is invertible. This is no loss of generality as we may apply a suitable
fractional linear parameter transformation. Multiplying (from the left or from
the right) with the inverse of the leading coefficient then yields a monic
motion polynomial. This amounts to a mere change of coordinates in the moving or
fixed coordinate frame of the rational motion under consideration. Once more,
this is no loss of generality in our context. Hence, we will feel free to assume
that a motion polynomial is monic whenever this seems appropriate.

\section{Exceptionally Low Degree of Trajectories}\label{chap2}

A motion polynomial $P + \eps D \in \DQ[t]$ of degree $n$ describes a rational
motion. We assume that $P + \eps D$ is reduced whence also the rational motion
is of quaternion degree $n$. A glance at Equation~\eqref{eq_trajectory} confirms
that its trajectories are generically of degree $2n$. But, as already mentioned
in the introduction, it is possible that this degree drops. Our aim in this
section is a characterization of reduced motion polynomials of degree $n$ that
parameterize rational motions of trajectory degree strictly less than~$2n$.

The trajectory of an arbitrary point $[x_0 + x] \in \Pj^3$ with $x = x_1\qi +
x_2 \qj + x_3 \qk$ is given by \eqref{eq_trajectory}. We may re-write this as
\begin{equation}\label{eq1}
  x_0\norm{P} + Px\Cj{P} + 2x_0P\Cj{D}
\end{equation}
and we assume, without loss of generality, that $P + \eps D$ is monic. Then the
degree of the rational parametric expression \eqref{eq1} equals $2n$ and the
degree of the corresponding rational motion is less than $2n$ precisely if the
expression in Equation~\eqref{eq1} has a real polynomial factor that is
independent from $x_0$ and $x$. A sufficient condition for this is existence of
a real polynomial factor of positive degree of the primal part $P$, i.\,e. $P = cQ$
with $c \in \R[t]$ and $Q \in \Q[t]$. Necessity of this condition is well-known
knowledge in the kinematics community but it is difficult to provide a precise
reference. Given the importance for this article, we provide a proof:

\begin{pro}
  \label{pro:degree}
  The degree of the rational curve parameterized by \eqref{eq1} is less than
  $2n$ for any choice of $x_0$ and $x$ if and only if $P$ has a real polynomial
  factor of positive degree.
\end{pro}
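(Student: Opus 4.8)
The plan is to prove both directions by analyzing when the three summands in Equation~\eqref{eq1}, namely $x_0\norm{P}$, $Px\Cj{P}$, and $2x_0P\Cj{D}$, share a common real polynomial factor independent of the choice of $x_0$ and $x$. The easy direction is sufficiency: if $P = cQ$ with $c \in \R[t]$ of positive degree, then $\norm{P} = c^2\norm{Q}$, $Px\Cj{P} = c^2\,Qx\Cj{Q}$, and $P\Cj{D} = c\,Q\Cj{D}$. Thus $c$ divides every term of \eqref{eq1} regardless of $x_0$ and $x$, and dividing it out lowers the degree below $2n$. So the whole argument rests on the converse.

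For necessity I would argue contrapositively: assuming the trajectory degree drops below $2n$ for \emph{some} (equivalently, for the generic) choice of $x_0,x$, I would extract a real polynomial factor of $P$. The key observation is that a common real factor $c(t)$ of the full expression \eqref{eq1} must in particular divide the value of the expression at any fixed admissible $[x_0+x]$. First I would specialize to $x_0 = 0$, reducing \eqref{eq1} to the pure rotation part $Px\Cj{P}$; a common factor $c$ of the trajectories of all such points (i.e.\ for all purely vectorial $x$) would have to divide $Px\Cj{P}$ for every $x \in \{\qi,\qj,\qk\}$ and, by the vector-space structure, for $x = 1$ as well, hence divide $P\Cj{P} = \norm{P}$ and the products $P\qi\Cj{P}$, $P\qj\Cj{P}$, $P\qk\Cj{P}$. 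The crux is then the purely algebraic claim: if a real polynomial $c$ divides all four of the quaternion polynomials $P\Cj{P}$, $P\qi\Cj{P}$, $P\qj\Cj{P}$, $P\qk\Cj{P}$, then $c$ divides $P$ itself (up to a real factor). I would establish this by passing to a common complex zero $t_0$ of $c$ and showing that $P(t_0)$ must be a right zero-divisor whose conjugate structure forces either $P(t_0) = 0$ or a linear real factor of the norm; tracking multiplicities via the $\norm{P}$ divisibility then promotes this to an honest real factor of $P$.

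The main obstacle will be handling the noncommutativity and zero-divisors carefully in this last step. Over $\C[t]$ the polynomial $P$ factors into complex-quaternion linear factors, but a common real factor of the norm corresponds to a conjugate pair $(t - t_0)(t - \overline{t_0})$ of complex roots, and I must show these roots genuinely sit inside $P$ rather than being artifacts that cancel between $P$ and $\Cj{P}$ in the products. The delicate point is that divisibility of $P\qi\Cj{P}$ etc.\ by $c$ constrains $P(t_0)$ from both sides simultaneously; I expect that combining the left-multiplication constraints coming from $\{1,\qi,\qj,\qk\}$ pins down $P(t_0)$ up to a scalar and forces the vanishing $P(t_0) = 0$ whenever $P(t_0)$ is invertible, which is exactly what is needed since any invertible-valued factor cannot be real. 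I would also need to confirm that reintroducing the $x_0 \neq 0$ terms adds no new possibilities for degree drop beyond those captured by the rotational part, which follows because the $2x_0 P\Cj{D}$ term carries the same left factor $P$ and the Study condition links $\Cj{D}$ to $\Cj{P}$ compatibly. Once the algebraic claim is secured, the contrapositive is complete.
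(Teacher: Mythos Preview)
Your sufficiency argument is fine and matches the paper. For necessity, however, there is a real gap: you silently assume that the real factor $c$ can be chosen \emph{independently} of the point $[x_0+x]$. The hypothesis only says that for every individual $(x_0,x)$ the parametric expression \eqref{eq1} has \emph{some} real factor of positive degree; nothing a priori forces these factors to coincide. Your sentence ``a common factor $c$ of the trajectories of all such points'' is precisely where the argument needs work. (A related slip: $1$ is not in the $\R$-span of $\qi,\qj,\qk$, so the phrase ``by the vector-space structure, for $x=1$ as well'' is not a valid justification for $c\mid P\Cj{P}$; that conclusion is, however, recoverable once you observe that $P(z)\qi\Cj{P(z)}=0$ already forces $P(z)$ to be non-invertible.)

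The gap can be closed, so your strategy is salvageable. One way: regard \eqref{eq1} as a polynomial in $t$ over $\R(x_0,x_1,x_2,x_3)$; by upper semicontinuity of the degree of the $\gcd$, the generic $\gcd$ has positive $t$-degree, and since the scalar component of \eqref{eq1} equals $x_0\norm{P}$ (use the Study condition to see that $P\Cj{D}$ and $Px\Cj{P}$ are vectorial), this generic $\gcd$ divides $\norm{P}\in\R[t]$ and hence lies in $\R[t]$, giving the desired point-independent~$c$. After that your ``crux'' claim is correct: from $P(z)\,x\,\Cj{P(z)}=0$ for $x\in\{\qi,\qj,\qk\}$ one gets $\norm{P(z)}=0$, hence $P(z)\,x\,\Cj{P(z)}=0$ for all $x\in\CQ$, which says the right ruling through $[P(z)]$ is contained in the left ruling through $[P(z)]$---impossible by Proposition~\ref{lem_rulings} unless $P(z)=0$. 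Then $(t-z)(t-\bar z)$ (or $t-z$ if $z\in\R$) divides~$P$.

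For comparison, the paper does \emph{not} try to extract a common factor. It also restricts to $x_0=0$, but then, for each vectorial $x$ separately, invokes \cite[Lemma~1]{li16} to turn the degree drop of $Px\Cj{P}$ into the existence of linear right factors $t-p$, $t-q$ of $P$ satisfying $\Cj{p}=x\Cj{q}x^{-1}$. Since $P$ has only finitely many linear right factors, the union of the solution sets of these finitely many equations has positive codimension, and any $x$ outside it yields a trajectory of full degree $2n$, contradicting the hypothesis. So the paper's argument is a finiteness/counting argument in the quaternion factorization theory, while yours (once repaired) is an evaluation-at-a-root argument using the two ruling families on $\N$; your route is closer in spirit to the geometric viewpoint of Section~\ref{sec2_2}.
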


\begin{proof}
  We already argued for the sufficiency of this statement. In order to see
  necessity, assume that $\mrpf P = 1$ (``maximal real polynomial factor'') and
  consider the trajectory of the special point $[x]$ (i.\,e. $x_0 = 0$ and $x$
  is yet unspecified), given parametrically by $Px\Cj{P}$. Since it is a
  spherical curve, it is of even degree and so is its maximal real polynomial
  factor. Hence, we may assume that there exists a \emph{quadratic} real factor
  of $Px\Cj{P}$. But then, by \cite[Lemma~1]{li16}, there exists a linear right
  factor $t-p$ of $P$ such that $t-\Cj{p}$ is a left factor of $x\Cj{P}$. Now
  the following hold:
  \begin{itemize}
  \item There are at most $n$ different linear right factors of~$P$
    \cite{hegedus13},
  \item the linear polynomial $t-p$ is a right factor of $P$ if and only if
    $t-\Cj{p}$ is a left factor of $\Cj{P}$ (because of $\Cj{(AB)} = \Cj{B}\Cj{A}$
    for any quaternion polynomials $A$, $B \in \Q[t]$), and
  \item the linear polynomial $t-\Cj{q}$ is a left factor of $\Cj{P}$ if and
    only if $t-x\Cj{q}x^{-1}$ is a left factor of $x\Cj{P}$.
  \end{itemize}
  To prove the last statement, let us at first assume that $t-\Cj{q}$ is a left
  factor of $\Cj{P}$, i.\,e. there exists $H\in\Q[t]$ such that
  $\Cj{P}=(t-\Cj{q})H$. Then
  \begin{align*}
    x\Cj{P} =x(t-\Cj{q})H = (tx-x\Cj{q}x^{-1}x)H=(t-x\Cj{p}x^{-1})xH.
  \end{align*}
  Conversely, let us assume that there exists $G\in\Q[t]$ such that
  $x\Cj{P}=(t-x\Cj{q}x^{-1})G$. Then
  \begin{align*}
    \Cj{P}=x^{-1}x\Cj{P}=x^{-1}(t-x\Cj{q}x^{-1})G=(tx^{-1}-\Cj{q}x^{-1})G = (t-\Cj{q})x^{-1}G.
  \end{align*}

  We infer that the equation
  \begin{equation}
    \label{eq_halfturn}
    \Cj{p} = x\Cj{q}x^{-1}
  \end{equation}
  must be fulfilled for two linear right factors $t - p$ and $t - q$ of $P$.
  However, there are only finitely many linear right factors and the solution
  set of \eqref{eq_halfturn} is of positive co-dimension. Hence, for any choice
  of $x$ outside the union of the solution sets of the finitely many equations
  of type \eqref{eq_halfturn} where $t-p$ and $t-q$ are right factors of $P$
  ensures that the degree of $Px\Cj{P}$ equals $2n$. Hence, the assumption
  $\mrpf P = 1$ gives a contradiction.
\end{proof}

We denote the unique monic real polynomial factor of maximal degree of $P$ by $c
\coloneqq \mrpf P$. With this notation, the right hand side of \eqref{eq1}
becomes
\begin{align*}
  c(c x_0\norm{Q} + cQx\Cj{Q}+2x_0Q\Cj{D}).
\end{align*}
Obviously, this is a rational parametric expression of degree $2n - m$ where $m
\coloneqq \deg c$. A further degree reduction occurs if and only if $c$ and
$Q\Cj{D}$ have a common real polynomial factor of positive degree. This leads us
to the following definition:

\begin{defi}
  \label{def_exceptional_degree_reduction}
  Let $P + \eps D\in\DQ[t]$ be a reduced monic motion polynomial and set $c
  \coloneqq \mrpf P$. We say that a \emph{degree reduction by $m$} occurs if $c$
  is of degree $m$. Denote by $Q$ the unique polynomial in $\Q[t]$ such that $P
  = cQ$. We say that an \emph{exceptional degree reduction by $e$} occurs, if
  $c$ and $Q\Cj{D}$ have a real common factor of degree~$e$.
\end{defi}

To summarize, the maximal trajectory degree of a rational motion of degree $n$
with degree reduction $m$ equals $2n-m-e$ where $e$ is the degree of the real
gcd of $c$ and $Q\Cj{D}$.

\subsection{Algebraic Point of View}\label{sec2_1}

We continue with an alternative algebraic characterization for occurrence of an
exceptional degree reduction that allows the systematic construction of rational
motions with exceptionally low trajectory degree. In Section~\ref{sec2_2} it will be used
to derive a geometric criterion.

\begin{theo}
  \label{deg_red_alg}
  Let $P + \eps D$ be a reduced motion polynomial. Set $c \coloneqq
  \mrpf P$ and let $Q$ be such that $P = cQ$. Exceptional degree reduction
  occurs if and only if there exists a common right factor $H \in \Q[t]$ of $Q$
  and $D$ such that $\norm{H}$ divides $c$. In this case, the degree reduces
  exceptionally by $\deg\norm{H}$.
\end{theo}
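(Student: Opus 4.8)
The plan is to prove the two implications separately, disposing of the \emph{if} direction by a direct computation and investing the real effort in the \emph{only if} direction, from which the degree count will then follow. For sufficiency, suppose $H$ is a common right factor of $Q$ and $D$, say $Q=Q_1H$ and $D=D_1H$, with $\norm H\mid c$. Since $\Cj{(D_1H)}=\Cj H\Cj{D_1}$ and $H\Cj H=\norm H$ is real and therefore central, I get $Q\Cj D=Q_1H\Cj H\Cj{D_1}=\norm H\,Q_1\Cj{D_1}$, so $\norm H$ divides $Q\Cj D$. Together with $\norm H\mid c$ this shows $\norm H$ divides the real gcd of $c$ and $Q\Cj D$, i.e. an exceptional degree reduction (by at least $\deg\norm H$) occurs in the sense of Definition~\ref{def_exceptional_degree_reduction}.

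For necessity, put $g\coloneqq\gcd(c,Q\Cj D)$ and assume $\deg g=e>0$. I first note that $\mrpf Q=1$, since any real factor of $Q$ would enlarge $c=\mrpf P$ inside $P=cQ$. The first key step is to rule out real linear factors of $g$: if $t-a$ with $a\in\R$ divided $g$, then evaluating at the central scalar $a$ (evaluation at a real number is a ring homomorphism $\Q[t]\to\Q$ that commutes with conjugation) would give $Q(a)\Cj{D(a)}=0$ in the division ring $\Q$, hence $Q(a)=0$ or $D(a)=0$. The first case makes $t-a$ a real factor of $Q$, contradicting $\mrpf Q=1$; the second makes $t-a$ a common real factor of $\mrpf D$ and $c=\mrpf P$, contradicting that $P+\eps D$ is reduced. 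Thus $g$ is a product of irreducible real quadratics.

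The heart of the argument is then to peel a matched common right factor off $Q\Cj D$ one quadratic at a time. For an irreducible quadratic $\pi\mid g$ we have $\pi\mid Q\Cj D$, so \cite[Lemma~1]{li16}, applied to the factors $Q$ and $\Cj D$, yields $h\in\Q$ with $\norm{t-h}=\pi$ such that $t-h$ is a right factor of $Q$ and $t-\Cj h$ is a left factor of $\Cj D$; conjugating the latter shows that $t-h$ is a right factor of $D$ as well. Removing $t-h$ from both replaces $Q\Cj D$ by a product in which the multiplicity of $\pi$ has dropped by exactly one, because $(t-h)(t-\Cj h)=\norm{t-h}=\pi$, so the lemma may be reapplied. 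Iterating, and peeling for each $\pi$ exactly as many matched factors as the multiplicity $v_\pi(g)$ of $\pi$ in $g$ — which is feasible since before the $k$-th such removal the current product still contains $\pi$ with multiplicity at least $v_\pi(g)-k+1\ge 1$ — I collect all peeled factors into a single common right factor $H$ of $Q$ and $D$ with $\norm H=\prod_\pi\pi^{v_\pi(g)}=g$. Then $\norm H=g\mid c$ and $\deg\norm H=e$. Combined with the sufficiency paragraph, where any admissible $H$ satisfies $\norm H\mid g$ and hence $\deg\norm H\le e$, the maximal choice gives exceptional reduction by exactly $\deg\norm H=e$.

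I expect the repeated application of the factorization lemma to be the main obstacle: one must ensure it delivers the \emph{same} $h$ as a right factor of $Q$ and, after conjugation, of $D$, and that multiplicities are bookkept so that precisely $v_\pi(g)$ copies are removed, making $\norm H$ equal to $g$ rather than a proper divisor or multiple of it. The exclusion of linear factors via reducedness is the secondary subtlety that forces $g$ to be a product of quadratics, consistent with $\deg\norm H=2\deg H$ being even.
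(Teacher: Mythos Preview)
Your argument is correct and arrives at the same conclusion, but by a different route than the paper. The paper's proof is a two-line appeal to an external result: after noting that $\gcd(c,Q)=1$ (by maximality of $c$) and $\gcd(c,D)=1$ (by reducedness of $P+\eps D$), it invokes \cite[Proposition~2.1]{cheng16}, which directly characterizes common real factors of $c$ and $Q\Cj{D}$ in terms of common right factors of $Q$ and $D$ whose norm equals that real factor. You instead unpack this: sufficiency by the one-line computation $Q\Cj{D}=\norm{H}\,Q_1\Cj{D_1}$, and necessity by first excluding real linear factors of $g=\gcd(c,Q\Cj{D})$ and then iterating \cite[Lemma~1]{li16} on each irreducible quadratic $\pi\mid g$ to peel matched linear right factors off $Q$ and $D$. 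The bookkeeping you flag is indeed the only delicate point, and it goes through because $\mrpf Q_k=1$ and $\pi\nmid D_k$ persist along the iteration (any real factor of a right quotient of $Q$ is a real factor of $Q$; any $\pi\mid D_k$ would force $\pi\mid D$, contradicting $\gcd(c,D)=1$), so the hypotheses of the lemma are met at every step. What your approach buys is self-containment modulo the more elementary Lemma~1, plus an explicit construction of $H$ with $\norm{H}=g$, which makes the degree count ``exceptional reduction by exactly $e$'' transparent; the paper's approach is shorter but treats the equivalence as a black box.
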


\begin{proof}
  By Definition~\ref{def_exceptional_degree_reduction}, exceptional degree
  reduction occurs if $Q\Cj{D}$ shares a real polynomial factor of positive
  degree with $c$. By construction, $\gcd(c,Q) = 1$. Moreover, a common real
  factor of $c$ and $D$ is also a real factor of $P + \eps D$. But $P + \eps D$
  is reduced so that $\gcd(c,D) = 1$ and, by \cite[Proposition~2.1]{cheng16}, a
  real polynomial factor $h$ of $c$ and $Q\Cj{D}$ exists if and only if $Q$ and
  $D$ have a common right factor $H$ such that $\norm{H} = h$. It leads to the
  claimed exceptional degree reduction.
\end{proof}

\begin{rem}
  The degree of $\norm{H}$ is even whence exceptional degree reduction occurs
  always \emph{by an even number.}
\end{rem}

So far, only few examples of rational motions with exceptional degree
reduction appeared in literature. Among them are:
\begin{itemize}
\item The Cardan motion \cite[Chapter~9, \S~11]{bottema90} mentioned in
  Section~\ref{sec_introduction} is represented by a reduced rational parametric
  expression of degree three but its trajectories are only of degree two.
\item The same is true for the Darboux motion \cite[Chapter~9, \S~3]{bottema90}.
  In fact, a Darboux motion is the composition of a Cardan motion with a
  suitable oscillating translation.
\item Rational motions with trajectories of degree three have been described by
  Wunderlich in \cite{wunderlich84}. Their reduced motion polynomial
  representation is of degree four.
\end{itemize}
Computing the quaternion degrees of these motions is an easy exercise, see
e.\,g. \cite{li15} for the Darboux motion.

According to \cite{wunderlich84} the general form of a rational cubic motion is
obtained as composition of a Darboux motion with a suitable translational
motion. Our framework allows a simple re-interpretation of this result:

\begin{example}
  \label{ex_wunderlich}
  A Darboux motion can be written as $C \coloneqq \norm{Q}Q + \eps DQ$ with
  polynomials $Q$, $D \in \Q$, $\deg Q = 1$, $\deg D = 2$, c.\,f. \cite{li15}. A
  linear motion polynomial $F \coloneqq f + \eps G$ with $f \in \R[t]$ and $G
  \in \Q[t]$ describes a translation. Left-multiplying $C$ with $F$ yields $W
  \coloneqq FC = f\norm{Q}Q + \eps(fD+\norm{Q}G)Q$. The polynomial $W$ is of
  degree four so that one would expect trajectories of degree eight. But the
  degree of the maximal polynomial factor of its primal part equals $\deg f +
  \deg\norm{Q} = 1 + 2 = 3$ whence an ordinary degree reduction by three occurs.
  Moreover, the criterion of Theorem~\ref{deg_red_alg} ensures an exceptional
  degree reduction by $2\deg Q = \deg\norm{Q} = 2$. Thus, trajectories are
  indeed of degree~$8-3-2=3$, as claimed by Wunderlich.
\end{example}

Theorem~\ref{deg_red_alg} allows the systematic construction of rational motions
with exceptional degree reduction. Starting with a monic motion polynomial $R +
\eps E$, we pick an arbitrary polynomial $H \in \Q[t]$. The rational motion $cQ
+ \eps D$ with $c = H\Cj{H}$, $Q = RH$, and $D = EH$ then has an exceptional
degree reduction by $2\deg H$.

An even lower degree can be obtained by choosing $H$ as a right factor of $R$,
i.\,e. $R = R'H$ and then considering the polynomial $cR + \eps D$ or even $cR +
\eps FD$ where $F \in \Q[t]$ is a polynomial only subject to the condition $\deg
FD \le \deg cR$. Note however, that $cR + \eps FD$ is not generally a motion
polynomial, i.\,e. it violates the Study condition. Thus, one has to appeal to
the extension of the isomorphism \eqref{eq_isomorphism} to a homomorphism
between the group of invertible dual quaternions
\cite{pfurner16:_path_planning}. An alternative is, to restrict the construction
to planar motions. Here $cR$ is a linear combination of $1$ and $\qk$ and
$D$ is a linear combination of $\qi$ and $\qj$. If $F$ is also chosen as linear
combination of $1$ and $\qk$ it is guaranteed that $cR + \eps FD$ is a planar
motion polynomial.

Here is a concrete example of the construction above:

\begin{example}
  The line with the parametric expression $t-\qk+\varepsilon\qj$ on the Study
  quadric represents a planar rotation around a fixed axis different from $\qk$.
  Its primal part $P \coloneqq t-\qk$ has the norm $c \coloneqq t^2+1$.
  Multiplying the primal part with $c$ and the dual part with $P$ from the right
  yields the polynomial $c(t-\qk)+\varepsilon (t\qi+\qj)$. It parameterizes a
  planar motion of quaternion degree three with trajectories of degree two,
  hence a Cardan motion \cite{juettler93}.
\end{example}

\subsection{Geometrical Point of View}\label{sec2_2}

Now we complement the algebraic criterion of Theorem~\ref{deg_red_alg} for
exceptional degree reduction by a geometric interpretation.

\begin{theo}\label{deg_red_geom}
  Let $P + \eps D$ be a reduced motion polynomial. Set $c \coloneqq \mrpf P$ and
  let $Q$ be such that $P = cQ$. If there exists a quadratic real factor $(t -
  z)(t - \overline{z})$ with $z \in \C$ of $c$ such that $[Q(z)]$ and $[D(z)]$
  lie on a left ruling of the null quadric $\N$ in $\Pj(\CQ)$, there is an
  exceptional degree reduction by two. This condition is also necessary.
\end{theo}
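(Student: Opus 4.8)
The plan is to reduce the statement to the algebraic criterion of Theorem~\ref{deg_red_alg}, specialised to a reduction by two. For a quadratic real factor $\rho \coloneqq (t-z)(t-\overline{z})$ of $c$ I would prove the equivalence of the following two conditions: (a) $[Q(z)]$ and $[D(z)]$ lie on a common left ruling of $\N$; and (b) $Q$ and $D$ possess a common linear right factor $H = t-h$ with $\norm{H} = \rho$ (so in particular $\norm{H}\mid c$). Granting this, Theorem~\ref{deg_red_alg} turns (b) into exceptional degree reduction by $\deg\norm{H}=2$, while conversely any reduction by two yields a common right factor $H$ of norm a quadratic factor of $c$, i.e. a linear $H=t-h$. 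Here (a) is the hypothesis of the ``if''-part and (b) the content of necessity, so the equivalence (a)$\Leftrightarrow$(b) delivers both halves. Throughout I would exploit that evaluation at the central scalar $z$ is an algebra homomorphism $\Q[t]\to\CQ$, that $\Cj{Q(z)}=\Cj{Q}(z)$ and $\norm{Q(z)}=\norm{Q}(z)$ (the complex scalar $z$ is untouched by quaternion conjugation), and that $\Cj{(z-h)}=z-\Cj{h}$.

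\emph{Necessity, the easy direction (b)$\Rightarrow$(a).} Given $H=t-h$ I first note $[z-h]\in\N$, since $\norm{z-h}=\norm{H}(z)=\rho(z)=0$. Writing $Q=Q'H$, $D=D'H$ gives $Q(z)=Q'(z)(z-h)$ and $D(z)=D'(z)(z-h)$, so that $Q(z)\Cj{(z-h)}=Q'(z)\norm{z-h}=0$ and likewise for $D$; by the definition of the left ruling through $[z-h]$ this places both points on that ruling, which is Corollary~\ref{cor2}(i). One must check that the projective points exist: from $c=\mrpf P$ and $P=cQ$ one obtains $\mrpf Q=1$, so the irreducible $\rho$ cannot divide $Q$, and since $Q(z)=0$ forces $\rho\mid Q$ (compare real and $\I$-parts of a linear remainder), we get $Q(z)\neq 0$; as $\gcd(c,D)=1$ by reducedness (as in the proof of Theorem~\ref{deg_red_alg}), the same argument gives $D(z)\neq 0$.

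\emph{Sufficiency, the main step (a)$\Rightarrow$(b).} Since $[Q(z)],[D(z)]\in\N$ we have $\norm{Q}(z)=\norm{D}(z)=0$, so $\rho$ divides $\norm{Q}$ and $\norm{D}$. Right-dividing by the central polynomial $\rho$ I write $Q=\Phi_Q\rho+(a_Qt+b_Q)$ and $D=\Phi_D\rho+(a_Dt+b_D)$ with $a_\bullet,b_\bullet\in\Q$. Here $a_Q,a_D\neq 0$, for otherwise $Q(z)$ or $D(z)$ would be a real quaternion of vanishing norm, hence $0$, contradicting the existence of the projective point. A short remainder computation (equivalently, the factorization lemma \cite[Lemma~1]{li16}) shows that $h_Q\coloneqq-a_Q^{-1}b_Q$ and $h_D\coloneqq-a_D^{-1}b_D$ lie on the sphere $\{\,h\in\Q: h+\Cj{h}=z+\overline{z},\ \norm{h}=z\overline{z}\,\}$, whence $\norm{t-h_Q}=\norm{t-h_D}=\rho$, and that $t-h_Q$, $t-h_D$ are right factors of $Q$, $D$. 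The remaining task, and the heart of the proof, is to show $h_Q=h_D$.

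\emph{The crux and the expected obstacle.} From $Q(z)=a_Q(z-h_Q)$, $D(z)=a_D(z-h_D)$ and the invertibility of $a_Q,a_D$, the incidence $D(z)\Cj{Q(z)}=0$ expressing that $[D(z)]$ lies on the left ruling of $[Q(z)]$ collapses to $(z-h_D)\Cj{(z-h_Q)}=0$. Writing $z=r+\I s$ with $r=\operatorname{Re}z$, $s=\operatorname{Im}z\neq 0$, and $h_Q=r+\vec u$, $h_D=r+\vec v$ with imaginary parts of equal length $|\vec u|=|\vec v|=|s|$, I would expand this product and split it into its real/$\I$- and scalar/vector quaternion components. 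The real scalar component reads $\langle\vec u,\vec v\rangle=s^2=|\vec u|\,|\vec v|$, so the Cauchy--Schwarz equality case forces $\vec u=\vec v$, i.e. $h_Q=h_D\eqqcolon h$ (the other components vanish automatically and only confirm consistency). Then $H=t-h$ is a common right factor of $Q$ and $D$ with $\norm{H}=\rho\mid c$, and Theorem~\ref{deg_red_alg} yields exceptional degree reduction by two. I expect the only genuinely delicate points to be the identification of $h_Q,h_D$ on the correct sphere and this clean reduction of the ruling incidence to the equality case of Cauchy--Schwarz; everything else is a direct appeal to Theorem~\ref{deg_red_alg} and Corollary~\ref{cor2}.
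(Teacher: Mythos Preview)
Your argument is correct, and your necessity direction ((b)$\Rightarrow$(a)) matches the paper's. For sufficiency, however, you take a substantially longer route. The paper does not pass through Theorem~\ref{deg_red_alg} here at all: it appeals directly to Definition~\ref{def_exceptional_degree_reduction}, which only asks that $c$ and $Q\Cj{D}$ share a real factor. The left-ruling hypothesis, via the description in Proposition~\ref{lem_rulings}, gives $Q(z)\Cj{D}(z)=(Q\Cj{D})(z)=0$ (and the same at $\overline{z}$), so $(t-z)(t-\overline{z})$ divides $Q\Cj{D}$; since it also divides $c$, we are done in two lines. Your approach instead reconstructs the common right factor $H$ explicitly: you produce separate right factors $t-h_Q$ of $Q$ and $t-h_D$ of $D$ from the remainders modulo $\rho$, and then use the incidence $(z-h_D)\Cj{(z-h_Q)}=0$ to force $h_Q=h_D$. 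This is valid---and your Cauchy--Schwarz step is fine, though note that the $\I$-imaginary part $s(\vec u-\vec v)=0$ already yields $\vec u=\vec v$ immediately---but it effectively re-proves, in this special case, the content of \cite[Proposition~2.1]{cheng16} that the paper has already packaged into Theorem~\ref{deg_red_alg}. What your route buys is an explicit formula for $H$ in terms of the linear remainder of $Q$ modulo $\rho$; what the paper's route buys is brevity, since the ruling condition translates directly into the divisibility required by the definition without ever constructing~$H$.
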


\begin{rem}
  Note that the formulation of Theorem~\ref{deg_red_geom} holds for $[Q(z)] =
  [D(z)]$ as well. In this case, the two points lie on a left ruling but do not
  span it. Of course, the criterion of Theorem~\ref{deg_red_geom} also implies
  that $[Q(\overline{z})]$ and $[D(\overline{z})]$ lie on a left ruling.
\end{rem}

\begin{proof}[Proof of Theorem~\ref{deg_red_geom}]
  We assume at first that exceptional degree reduction by two occurs. By
  Theorem~\ref{deg_red_alg}, there exist polynomials $Q'$, $D'$, $H \in \Q[t]$
  such that $Q = Q'H$, $D = D'H$, $\deg H = 1$ and $\norm{H}$ divides $c$.
  Moreover, $H$ cannot be real because $Q$ has no real factors. Thus, there
  exists $z \in \C \setminus \R$ such that $\norm{H} = (t - z)(t -
  \overline{z})$. But then
  \begin{equation*}
    Q(z)\Cj{D}(z) = (Q\Cj{D})(z) = (Q'H\Cj{H}\Cj{D'})(z) =
    (H\Cj{H})(z) (Q'\Cj{D'})(z) = 0
\end{equation*}
  because $(H\Cj{H})(z) = \norm{H}(z) = 0$. Note that $Q(z)$ and $D(z)$ cannot
  vanish (because $P + \eps D$ is assumed to be reduced and $\mrpf Q = 1$) so
  that $[Q(z)]$ and $[D(z)]$ are well defined points. By
  Proposition~\ref{lem_rulings}, $[Q(z)] \vee [D(z)]$ lies on a left ruling.
  (This includes the case $[Q(z)] = [D(z)]$.)

  Now assume conversely that there exists $z \in \C$ such that $[Q(z)] \vee
  [D(z)]$ lies on a left ruling and $c' \coloneqq (t-z)(t-\overline{z}) \in
  \R[t]$ divides $c$. Proposition~\ref{lem_rulings} then implies $Q(z)\Cj{D}(z)
  = (Q\Cj{D})(z) = 0$. The same holds true for the complex conjugate
  $\overline{z}$ whence there exists $F \in \Q[t]$ such that $Q\Cj{D} = c'F$.
  Thus, exceptional degree reduction in the sense of
  Definition~\ref{def_exceptional_degree_reduction} occurs.
\end{proof}

\begin{rem}
  \label{rem_different_factors}
  From Definition~\ref{def_exceptional_degree_reduction} it is immediately clear
  that different quadratic factors $c_\ell \coloneqq
  (t-z_\ell)(t-\overline{z_\ell})$ give rise to different left rulings and
  contribute each to an exceptional degree reduction by two.
\end{rem}

It seems natural to interpret factors of multiplicity $\mu > 1$ of $c$ that lead
to an exceptional degree reduction in terms of left rulings and their
``multiplicity''. This we define as intersection multiplicity of the ruled
surface spanned by $Q$ and $D$ and the ruled surface of left rulings $\RLR$,
both viewed as rational curves on the so called Pl\"ucker or Klein quadric
\cite[Section~2.1]{pottmann10}, which provides a point model for lines in
three-dimensional projective space. This point model can be constructed by
identifying lines with half-turns about these lines. Half-turns are represented
by dual quaternions
$x + \eps y$ with zero scalar parts, i.\,e., $(x + \eps y) + (\Cj{x} + \eps
\Cj{y}) = 0$. Under this assumption, the Study condition $x\Cj{y} + y\Cj{x} = 0$
reduces to the Pl\"ucker condition \cite[Equation~(2.3)]{pottmann10} which is
the defining equation of the Pl\"ucker quadric. The line spanned by points
$[a]$, $[b] \in \Pj(\CQ)$ is represented by
\begin{equation}
  \label{eq_pluecker}
a\Cj{b} - \Cj{a}b + \eps (\Cj{a}b - b\Cj{a}).
\end{equation}
The ruled surface of left rulings $\RLR$ is given by all dual quaternions
\begin{equation}
  \label{eq_left_rulings}
  x - \eps x\quad\text{where}\quad
  x \in \CQ,\ x + \Cj{x} = x\Cj{x} = 0.
\end{equation}
It is a conic on the Pl\"ucker quadric. Equation~\eqref{eq_left_rulings} follows
from \eqref{eq_rulings} together with \eqref{eq_pluecker}: If the points $[a]$
and $[b]$ span a left ruling $\ell$, $a\Cj{b} = 0$ and $b\Cj{a} = 0$ hold,
whence $\ell$ is represented by $-\Cj{a}b + \eps \Cj{a}b$.

\begin{defi}
  For a rational curve $[C]$ the \emph{$m$-th osculating space} in $[C(t_0)]$
  is the projective subspace spanned by $[C(t_0)]$,
  $[\frac{\mathrm{d}}{\mathrm{d}t}C(t_0)]$, \ldots,
  $[\frac{\mathrm{d}^m}{\mathrm{d}t^m}C(t_0)]$. Two rational curves
  \emph{intersect with multiplicity $\mu$} in a point if their $m$-th osculating
  spaces at that point agree for $m = 0,1,\ldots,\mu-1$.
\end{defi}

Because $\RLR$ is a conic on the Pl\"ucker quadric, its $m$-th osculating
space for $m \ge 2$ is the conic's support plane. We denote it by $\gamma$. It is
described by \eqref{eq_left_rulings} but with the norm condition $x\Cj{x} = 0$
removed.

\begin{theo}
  \label{th_multiplicity}
  Let $P + \eps D$ be a reduced motion polynomial. Set $c \coloneqq \mrpf P$ and
  let $Q$ be such that $P = cQ$. If there exists a real factor $(t - z)^\mu(t -
  \overline{z})^\mu$ with $z \in \C$, $\mu \in \mathbb{N}$ of $c$ such that the
  ruled surface $[Q] \vee [D]$ intersects $\RLR$ in $[Q(z)] \vee [D(z)]$ with
  multiplicity $\mu$, there is an exceptional degree reduction by $2\mu$.
\end{theo}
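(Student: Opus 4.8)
The plan is to reduce the geometric multiplicity statement to the algebraic divisibility condition of Definition~\ref{def_exceptional_degree_reduction}. Since the quadratic factor $(t-z)^\mu(t-\overline{z})^\mu$ is assumed to divide $c$, it suffices to show that it also divides $Q\Cj{D}$; then it divides the real greatest common divisor of $c$ and $Q\Cj{D}$, and an exceptional degree reduction by $2\mu$ occurs. As $Q$ and $\Cj{D}$ lie in $\Q[t]$, the product $Q\Cj{D}$ again has real (Hamiltonian) coefficients, so it is enough to prove $(t-z)^\mu \mid Q\Cj{D}$; the conjugate factor $(t-\overline{z})^\mu$ then divides $Q\Cj{D}$ automatically.

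The key observation I would isolate first is that $f \coloneqq Q\Cj{D}$ has vanishing scalar part. Indeed, $P + \eps D$ is a motion polynomial, so the Study condition $P\Cj{D} + D\Cj{P} = 0$ holds; substituting $P = cQ$ with $c \in \R[t]$ and cancelling the nonzero real factor $c$ gives $Q\Cj{D} + D\Cj{Q} = 0$, that is $f + \Cj{f} = 0$. Hence $f$ is a pure quaternion polynomial.

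Next I would describe the ruled surface $[Q] \vee [D]$ as a curve $\Phi$ on the Pl\"ucker quadric via \eqref{eq_pluecker}, namely $\Phi = A + \eps B$ with primal part $A = Q\Cj{D} - \Cj{Q}D$ and dual part $B = \Cj{Q}D - D\Cj{Q}$. By \eqref{eq_left_rulings} the support plane $\gamma$ of the conic $\RLR$ is the projective subspace cut out by the linear condition $A + B = 0$. A direct computation gives $A + B = Q\Cj{D} - D\Cj{Q} = f - \Cj{f} = 2f$, the last equality using that $f$ is pure; thus for this particular curve the defect from lying in $\gamma$ is measured precisely by $f$. Now all osculating spaces of the conic $\RLR$ are contained in its support plane $\gamma$. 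Therefore, if $\Phi$ intersects $\RLR$ at $[Q(z)] \vee [D(z)]$ with multiplicity $\mu$, the osculating spaces of $\Phi$ up to order $\mu-1$ lie in $\gamma$, i.e. $\Phi(z), \Phi'(z), \dots, \Phi^{(\mu-1)}(z)$ all satisfy $A + B = 0$. Equivalently $\frac{\mathrm{d}^j}{\mathrm{d}t^j}(A+B)(z) = 2f^{(j)}(z) = 0$ for $0 \le j \le \mu-1$, so $(t-z)^\mu \mid f = Q\Cj{D}$. Together with the reduction of the first paragraph this yields the claimed exceptional degree reduction by $2\mu$.

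The main obstacle is the precise translation in the last step between the projective, coordinate-free notion of intersection multiplicity (agreement of osculating spaces) and the order of vanishing of the linear form $A+B$ along $\Phi$; the clean identification $A+B = 2f$ that makes this work rests entirely on the Study-condition observation of the second paragraph. One should also treat the degenerate case $[Q(z)] = [D(z)]$, where $\Phi(z) = 0$ and one must pass to a reduced representative of the ruled surface before differentiating. Writing $A = g A_0$, $B = g B_0$ with the scalar gcd $g$, one gets $2f = g(A_0 + B_0)$, so $\mathrm{ord}_z f = \mathrm{ord}_z g + \mathrm{ord}_z(A_0+B_0) \ge \mu$ and the conclusion is only strengthened.
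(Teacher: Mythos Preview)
Your argument is correct and follows essentially the same route as the paper: parametrize the ruled surface $[Q]\vee[D]$ via \eqref{eq_pluecker}, use that the support plane $\gamma$ of $\RLR$ is cut out by the condition ``primal part $+$ dual part $=0$'', and reduce this via the Study condition $Q\Cj{D}+D\Cj{Q}=0$ to $(Q\Cj{D})^{(m)}(z)=0$ for $m=0,\ldots,\mu-1$. Your explicit isolation of $f=Q\Cj{D}$ being pure and the identity $A+B=2f$ is just a repackaging of that same step, and your brief treatment of the degenerate case $[Q(z)]=[D(z)]$ (passing to a reduced representative) goes slightly beyond what the paper spells out.
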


\begin{proof}
  By \eqref{eq_pluecker} the ruled surface $[Q] \vee [D]$ is parameterized by
  the polynomial
  \begin{equation}
    \label{eq_ruled_surface}
Q\Cj{D} - \Cj{Q}D + \eps (\Cj{Q}D - D\Cj{Q}).
  \end{equation}
  It intersects $\RLR$ with multiplicity $\mu$ in $[Q(z)] \vee [D(z)]$, i.\,e.,
  their $m$-th osculating spaces coincide for $m = 0,1,\ldots,\mu-1$. For
  $\mu=1$ this means that $[Q(z)] \vee [D(z)]$ is a left ruling. For $\mu=2$
  this means that in addition the first derivative point lies on the conic
  tangent in $[Q(z)] \vee [D(z)]$. Because ruled surfaces are curves on a common
  quadric (the Pl\"ucker quadric), this is equivalent to the seemingly weaker
  condition that the derivative point lies in the conic's support plane
  $\gamma$. For $\mu \ge 3$ this means that also the higher order derivative
  points lie in $\gamma$. All in all, intersection with multiplicity $\mu$
  happens if all derivative points up to order $\mu-1$ lie in $\gamma$.

  The condition that the $m$-th derivative point at $t = z$ lies in the support
  plane of $\RLR$ (viewed as a conic on the Pl\"ucker quadric) is
  \begin{equation*}
    \frac{\mathrm{d}^m}{\mathrm{d}t^m}(Q\Cj{D} - \Cj{Q}D)(z) +
\frac{\mathrm{d}^m}{\mathrm{d}t^m}(\Cj{Q}D - D\Cj{Q})(z) = 0.
  \end{equation*}
  Using the Study condition $Q\Cj{D} + D\Cj{Q} = 0$, it reduces to
  \begin{equation*}
    \frac{\mathrm{d}^m}{\mathrm{d}t^m}(Q\Cj{D})(z) = 0.
  \end{equation*}
  Hence, intersection with multiplicity $\mu$ implies that $(t-z)^\mu$ is a
  factor of $Q\Cj{D}$. The same is true for $(t-\overline{z})^\mu$ and, by
  Definition~\ref{def_exceptional_degree_reduction}, exceptional degree
  reduction by $2\mu$ occurs.
\end{proof}

\begin{rem}
  From the proof of Theorem~\ref{th_multiplicity} it is easy to infer that an
  exceptional degree reduction by $2\mu$ caused by a real factor
  $(t-z)^\mu(t-\overline{z})^\mu$ of $c$ and $Q\Cj{D}$ implies intersection
  multiplicity $\mu$ of $\RLR$ and the ruled surface $[Q] \vee [D]$ at $[Q(z)]
  \vee [D(z)]$ (and also at $[Q(\overline{z})] \vee [D(\overline{z})]$). Similar
  to Remark~\ref{rem_different_factors} we can argue that common quadratic real
  factors of $c$ and $Q\Cj{D}$ and their multiplicities are in one-to-one
  correspondence with left rulings on $[Q] \vee [D]$ and their intersection
  multiplicities with~$\RLR$.
\end{rem}

The geometric characterization of exceptional degree reduction is capable of
explaining the observation that motion and inverse motion need not have
trajectories of the same degree. The inverse of a rational motion $[P+\eps D]$
is $[\Cj{P}+\eps \Cj{D}]$. This operation does not affect the degree of the
motion as rational curve on the Study quadric, but it interchanges the two
families of rulings on $\N$. Thus, it is possible that exceptional degree
reduction occurs for a rational motion while it does not for its inverse motion.

\begin{example}
  We consider the Darboux motion $C = \norm{Q}Q + \eps DQ$ of
  Example~\ref{ex_wunderlich} and denote the conjugate complex roots of the
  maximal real polynomial factor $\norm{Q}$ of the primal part by $z$ and
  $\overline{z}$. Evaluating reduced primal part and dual part at $z$ yields
  points $[Q(z)]$ and $[DQ(z)]$. We have $\norm{Q}(z)=0$ and hence also
  \begin{equation*}
    \norm{DQ(z)} = \norm{D(z)}\norm{Q(z)} = 0.
  \end{equation*}
  This means that the points $[Q(z)]$ and $[DQ(z)]$ lie on the null quadric $\N$
  and, by Corollary~\ref{cor2}, also lie on a left ruling. Since dual quaternion
  conjugation interchanges the two families of rulings on $\N$, the
  corresponding points of the inverse motion lie on a right ruling. Exceptional
  degree reduction for the inverse motion occurs if and only if they also lie on
  a left ruling, which is only possible if they coincide, i.\,e. $[Q(z)] =
  [DQ(z)]$. Darboux motions with this property are known as \emph{vertical
    Darboux motions.}
\end{example}

\section{Conclusion}

We gave precise algebraic and geometric criteria on rational motions with an
exceptional degree reduction, thus answering a question that has been open for a
while in the kinematics community. Our criteria (Theorem~\ref{deg_red_alg} and
Theorem~\ref{deg_red_geom}) are not invariant with respect to conjugation. This
explains the phenomenon that a motion and its inverse motion can be of different
trajectory degrees.

The geometric criterion for exceptional degree reduction can also be formulated
for algebraic motions (algebraic curves on the Study quadric). It is obvious to
conjecture that it characterizes exceptional degree reduction also in the
algebraic case. If this is the case, it would also explain the possibility of
different degrees of an algebraic motion and its inverse. This is a topic of future research.

\section*{Acknowledgments}

Johannes Siegele was supported by the Austrian Science Fund (FWF): P~30673 (Extended Kinematic Mappings and Application to Motion Design).
Daniel F. Scharler was supported by the Austrian Science Fund (FWF): P~31061 (The Algebra of Motions in 3-Space).
 
\bibliographystyle{plain}

\end{document}